\def\N{{\mathbb{N}}}
\def\Z{{\mathbb{Z}}}
\def\P{{\mathbb{P}}}
\def\slab{{\mathbb S}}
\def\an{{\mathrm{An}}}
\def\qm{c_*}
\newtheorem{theorem}{Theorem}[section]
\newtheorem{lemma}[theorem]{Lemma}
\newtheoremstyle{likedef}
  {}%
  {}%
  {}%
  {}
  {\bfseries}%
  {.}%
  {.5em}%
  {}%
\theoremstyle{likedef}
\newtheorem{remark}[theorem]{Remark}
\numberwithin{equation}{section}
\begin{document}

\title{Kesten's incipient infinite cluster and quasi-multiplicativity of crossing probabilities}

\author{
Deepan Basu
\thanks{Max-Planck Institute for Mathematics in the Sciences, 
Inselstrasse 22, 04103 Leipzig, Germany. 
email: deepan.basu@mis.mpg.de}
\and
Artem Sapozhnikov
\thanks{University of Leipzig, Department of Mathematics, 
Augustusplatz 10, 04109 Leipzig, Germany.
email: artem.sapozhnikov@math.uni-leipzig.de}
}

\maketitle

\begin{abstract}
In this paper we consider Bernoulli percolation on an infinite connected bounded degrees graph $G$. 
Assuming the uniqueness of the infinite open cluster and a quasi-multiplicativity of crossing probabilities, 
we prove the existence of Kesten's incipient infinite cluster. 
We show that our assumptions are satisfied if $G$ is a slab $\Z^2\times\{0,\ldots,k\}^{d-2}$ ($d\geq 2$, $k\geq 0$). 
We also argue that the quasi-multiplicativity assumption is fulfilled for $G=\Z^d$ if and only if $d<6$. 
\end{abstract}

\section{Introduction}

Let $G$ be an infinite connected bounded degrees graph with a vertex set $V$. Let $\rho$ be the graph metric on $V$, 
and define for $v\in V$ and positive integers $m\leq n$, 
\[
B(v,n) = \{x\in V~:~\rho(v,x)\leq n\},\quad 
S(v,n) = \{x\in V~:~\rho(v,x) = n\},
\]
\[
A(v,m,n) = B(v,n)\setminus B(v,m-1).
\]

\smallskip

Consider Bernoulli bond percolation on $G$ with parameter $p\in[0,1]$ and denote the corresponding probability measure by $\P_p$. 
The open cluster of $v\in V$ is denoted by $C(v)$. 
Let $p_c$ be the critical threshold for percolation, i.e., for $v\in V$, 
\[
p_c = \inf\left\{p:\P_p[|C(v)|=\infty]>0\right\}.
\]
For $x,y\in V$ and $X,Y,Z\subset V$, we write $x\leftrightarrow y$ in $Z$ if there is a nearest neighbor path of open edges such that all its vertices are in $Z$, 
$X\leftrightarrow Y$ in $Z$ if there exist $x\in X$ and $y\in Y$ such that $x\leftrightarrow y$ in $Z$, and $x\leftrightarrow Y$ in $Z$, if 
there exist $y\in Y$ such that $x\leftrightarrow y$ in $Z$. 
If $Z=V$, we omit ``in $Z$'' from the notation. 
We use $\nleftrightarrow$ instead of $\leftrightarrow$ to denote complements of the respective events. 

\medskip

In this note we are interested in the existence and equality of the limits
\begin{equation}\label{eq:limits}
\lim_{n\to\infty}\P_{p_c}\left[E~|~w\longleftrightarrow S(w,n)\right]\quad\text{and}\quad
\lim_{p\searrow p_c}\P_p\left[E~|~|C(w)|=\infty\right],
\end{equation}
where $E$ is a cylinder event. The question is highly non-trivial if $\P_{p_c}\left[|C(w)|=\infty\right]=0$. 
The seminal result of Kesten \cite[Theorem~(3)]{KestenIIC} states that if $G$ is from a class of two dimensional graphs, such as $\Z^2$, 
then the above two limits exist and have the same value $\nu_{G,w}(E)$.  
By Kolmogorov's extension theorem, $\nu_{G,w}$ extends uniquely to a probability measure on configurations of edges, which is often called
{\it Kesten's incipient infinite cluster} measure. It is immediate that $\nu_{G,w}[|C(w)|=\infty] = 1$. 
Kesten's argument is based on the existence of an infinite collection of open circuits around $w$ in disjoint annuli 
and the properties that (a) each path from $w$ to infinity intersects every such circuit and 
(b) by conditioning on the innermost open circuit in an annulus, the occupancy configuration in the region not surrounded by the circuit is 
still an independent Bernoulli percolation. These properties are no longer valid when one considers higher dimensional lattices. 
In fact, the existence of Kesten's IIC on $\Z^d$ for $d\geq 3$ is still an open problem. 
A partial progress has been recently made in sufficiently high dimensions by Heydenreich, van der Hofstad and Hulshof \cite[Theorem~1.2]{HvdHH14}, 
who showed using lace expansions the existence of the first limit in \eqref{eq:limits} under the assumption that $n^{-2}\,\P_{p_c}[0\longleftrightarrow S(0,n)]$ converges. 
Concerning low dimensional lattices, almost nothing is known there about critical and near critical percolation, 
and the existence of Kesten's IIC seems particularly hard to show. 
Several other constructions of incipient infinite clusters are obtained by J\'arai \cite{Jarai} for planar lattices and 
van der Hofstad and J\'arai \cite{vdHJ04} for high dimensional lattices. 

\medskip

The main result of this note is the existence and the equality of the two limits in \eqref{eq:limits} for graphs satisfying two assumptions: 
(A1) uniqueness of the infinite open cluster and (A2) quasi-multiplicativity of crossing probabilities. 
While (A1) is satisfied by many amenable graphs, most notably $\Z^d$, 
(A2) can be expected only in low dimensional graphs. 
For instance, we argue below that (A2) holds for $\Z^d$ if and only if $d<6$. 
In our second result, we prove that (A2) is satisfied by slabs $\Z^2\times\{0,\ldots, k\}^{d-2}$ ($d\geq 2$, $k\geq 0$), 
thus showing for these graphs the existence and equality of the limits in \eqref{eq:limits}. 
We now state the assumptions and the main result, and then comment more on the assumptions. 

\medskip

\begin{itemize}
\item[{\bf (A1)}] (Uniqueness of the infinite open cluster) 
For any $p\in[0,1]$ there exists almost surely at most one infinite open cluster.
\item[{\bf (A2)}] (Quasi-multiplicativity of crossing probabilities)
Let $v\in V$ and $\delta>0$. There exists $\qm>0$ such that for any $p\in[p_c,p_c+\delta]$, integer $m>0$, a finite connected set $Z\subset V$ such that 
$Z\supseteq A(v,m, 4m)$, and sets $X\subset Z\cap B(v,m)$ and $Y\subset Z\setminus B(v,4m)$, 
\begin{equation}\label{eq:A2}
\P_p[X\leftrightarrow Y\text{ in }Z] \geq \qm\cdot \P_p[X\leftrightarrow S(v,2m)\text{ in }Z]\cdot \P_p[Y\leftrightarrow S(v,2m)\text{ in }Z].
\end{equation}
\end{itemize}
\begin{theorem}\label{thm:IIC}
Assume that the graph $G$ satisfies the assumptions (A1) and (A2) for some choice of $v\in V$ and $\delta>0$. 
Then, for any cylinder event $E$, the two limits in \eqref{eq:limits} exist and have the same value. 

If the assumptions (A1) and (A2) are satisfied at $p=p_c$, then the first limit in \eqref{eq:limits} exists. 
\end{theorem}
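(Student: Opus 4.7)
The plan is to establish existence of the first limit in \eqref{eq:limits} at $p = p_c$ via a partial-exploration argument leaning on (A2), and then to identify the second limit, and its equality with the first, by combining (A1) with monotone coupling and continuity in $p$.

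\textbf{First limit at $p_c$.} Fix a cylinder event $E$ supported by edges in $B(w, m_0)$, and let $n > m_0$. I would reveal all edges with both endpoints in $B(w, 4n)$, writing $\mathcal{C}$ for the resulting open cluster of $w$ and $\partial \mathcal{C} := \mathcal{C} \cap S(w, 4n)$ for its exit set. Since $\mathbf{1}_E$ is measurable with respect to this partial exploration, for $N \geq 16 n$,
\[
\P_{p_c}\bigl[E \cap \{w \leftrightarrow S(w, N)\}\bigr] = \mathbb{E}_{p_c}\bigl[\mathbf{1}_E \cdot H_N(\partial \mathcal{C})\bigr],
\]
where $H_N(B) := \P_{p_c}[B \leftrightarrow S(w, N)\text{ in }V \setminus B(w, 4n - 1)]$. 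Writing $\pi(N) := \P_{p_c}[w \leftrightarrow S(w, N)]$, I would apply (A2) at $v = w$, $m = 4n$, $Y = S(w, N)$ once with $X = B$ and once with $X = \{w\}$, and combine with the monotone inclusion $\{B \leftrightarrow S(w, N)\} \subset \{S(w, 8n) \leftrightarrow S(w, N)\}$ to obtain a sandwich
\[
\qm\, H_{8n}(B) \;\leq\; \frac{H_N(B)}{\pi(N)} \;\leq\; \frac{1}{\qm\, \pi(8n)},
\]
valid uniformly in $N$. The next task is to upgrade this bounded oscillation into convergence; the idea is to iterate the sandwich across successive doubling scales and to use consistency relations of the form $H_N(B) = \mathbb{E}_{p_c}[H_N(\partial \mathcal{C}') \mid \partial \mathcal{C} = B]$, where $\mathcal{C}'$ is the re-exploration at a larger scale. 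A dominated-convergence argument in the exploration formula then delivers the Cauchy property of $\P_{p_c}[E \mid w \leftrightarrow S(w, N)]$, yielding the first limit.

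\textbf{Second limit and equality.} Under (A1), for $p \in (p_c, p_c + \delta]$ the event $\{|C(w)| = \infty\}$ coincides up to a null set with $\bigcap_N \{w \leftrightarrow S(w, N)\}$, so
\[
\P_p\bigl[E \mid |C(w)| = \infty\bigr] = \lim_{N \to \infty} \P_p\bigl[E \mid w \leftrightarrow S(w, N)\bigr],
\]
and the previous paragraph applied at $p$ in place of $p_c$ would give existence of this limit. To prove equality with the limit at $p_c$, I would send $p \searrow p_c$ inside the exploration formula: the standard monotone coupling of $(\P_p)_{p \geq p_c}$ yields continuity of the finite-volume factors on the left, and the uniform bounds of the sandwich above allow the interchange of limits. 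The second assertion of the theorem (existence of the first limit under (A1), (A2) only at $p_c$) is the content of the first paragraph alone.

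\textbf{Main obstacle.} The essential difficulty is that (A2) is one-sided: it controls $H_N(B)$ only from below. The upper bound on $H_N(B)/\pi(N)$ emerges by applying (A2) a second time to lower-bound the denominator, but the resulting sandwich has $N$-independent, not $N$-tightening, width. Converting this bounded oscillation into a genuine limit, uniformly over the random exit set $\partial \mathcal{C}$ and over all cylinder events supported in $B(w, m_0)$, is the technical heart of the argument and requires careful iteration of (A2) across a sequence of exploration scales.
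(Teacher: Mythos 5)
There are two genuine gaps here, one structural and one at the exact point you yourself flag as the ``main obstacle''. First, the exploration identity $\P_{p_c}[E\cap\{w\leftrightarrow S(w,N)\}]=\mathbb{E}_{p_c}[\mathbf 1_E\,H_N(\partial\mathcal C)]$ is false as written: after revealing all edges of $B(w,4n)$, a connection from $\partial\mathcal C$ to $S(w,N)$ may re-enter $B(w,4n)$ and use already-revealed edges, so restricting the continuation to $V\setminus B(w,4n-1)$ does not reproduce the event $\{w\leftrightarrow S(w,N)\}$; you only get an inequality. To obtain an exact factorization one must explore only the open cluster of the inner box (so that the complement is still i.i.d.) and, crucially, work on the event that the surrounding annulus has a \emph{unique} crossing cluster, so that all exit vertices of the explored set lie in one open cluster and ``$w$ connects to some exit point'' together with ``some exit point connects to $S(w,N)$ off the explored set'' really implies $w\leftrightarrow S(w,N)$. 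This is exactly where (A1) enters the paper's proof (through \eqref{eq:E2mnE1mn}, which supplies scales on which uniqueness of the crossing cluster holds with conditional probability $1-\varepsilon_i$); your first-limit argument never invokes (A1) at all, which is a sign that the decomposition cannot be made exact.

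Second, and more seriously, the step ``iterate the sandwich across successive doubling scales and use dominated convergence to get the Cauchy property'' is not a proof strategy: a ratio bounded above and below by $N$-independent constants is perfectly compatible with non-convergence, and no consistency relation or dominated-convergence argument upgrades bounded oscillation to convergence. The mechanism that actually works is a contraction: the paper writes $\P_p[E\mid w\leftrightarrow S(w,n)]$ as a ratio of multi-scale matrix products (see \eqref{eq:ratio:main}) and uses (A2) to prove a uniform cross-ratio bound \eqref{eq:Mratio} on the transition kernels $M_p$; Hopf's theorem on positive matrices then forces the projective oscillation to decay like $((\kappa-1)/(\kappa+1))^{s-1}$ in the number $s$ of scales, uniformly in $n$ and in $p\in[p_c,p_c+\delta]$. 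Your single application of (A2) is the $s=1$ case and gives only the bounded oscillation you already noted. The same uniformity in $p$ is what legitimizes the interchange of $N\to\infty$ and $p\searrow p_c$ in your last paragraph; the sandwich bounds alone do not. (A smaller point: the hypothesis grants (A2) for \emph{some} $v$, not for $v=w$, so applying it centered at $w$ is an unwarranted strengthening, though this is easy to repair by centering all annuli at the given $v$.)
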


Before we discuss the strategy of the proof, let us comment on the assumptions. 
\paragraph{Comments on (A1):}
\begin{enumerate}
\item
(A1) is satisfied by many sufficiently regular (e.g., vertex transitive) amenable graphs, 
most notably lattices $\Z^d$ and slabs $\Z^2\times\{0,\ldots,k\}^{d-2}$ ($d\geq 2$, $k\geq 0$), see, e.g., \cite{BS96}. 
\item
(A1) is equivalent to the assumption that for some $\delta>0$ there exists at most one infinite open cluster for any fixed $p\in[p_c,p_c+\delta]$. 
Indeed, if for a given $p$ the infinite open cluster is unique almost surely, then 
the same holds for any $p'>p$, see, e.g., \cite{HP99,Sch99}.
\item
For $v\in V$ and $m\leq n$, let $E_1(v,m,n) = \{S(v,m)\leftrightarrow S(v,n)\}$ 
and $E_2(v,m,n)$ the event that in the annulus $A(v,m,n)$ there are at least two disjoint open crossing clusters.

Assumption (A1) is equivalent to the following one, which will be used in the proof of Theorem~\ref{thm:IIC}: 
For any $v\in V$, $\varepsilon>0$ and $m\in \N$, there exists $n>4m$ such that 
\begin{equation}\label{eq:E2mn}
\sup_{p\in[0,1]}\P_p\left[E_2(v,m,n)\right]<\varepsilon
\end{equation}
or, equivalently,
\begin{equation}\label{eq:E2mnE1mn}
\sup_{p\in[0,1]}\P_p\left[E_2(v,m,n)~|~E_1(v,m,n)\right]<\varepsilon.
\end{equation}
The equivalence of the claims \eqref{eq:E2mn} and \eqref{eq:E2mnE1mn} follows from the inequalities 
\[
\P_p\left[E_2(v,m,n)\right]\leq \P_p\left[E_2(v,m,n)~|~E_1(v,m,n)\right]\leq \P_p\left[E_2(v,m,n)\right]^{\frac 12},
\]
where the second one is a consequence of the BK inequality. 

It is elementary to see that \eqref{eq:E2mn} implies (A1). On the other hand, if \eqref{eq:E2mn} does not hold, 
then there exist $v_0\in V$, $\varepsilon_0>0$ and $m_0\in\N$ such that for all $n>4m_0$, 
$\sup_{p\in[0,1]}\P_p\left[E_2(v_0,m_0,n)\right]\geq \varepsilon_0$.
The function $\P_p\left[E_2(v_0,m_0,n)\right]$ is continuous in $p\in[0,1]$ and monotone decreasing in $n$. 
Thus, there exists $p_0\in[0,1]$ such that $\P_{p_0}\left[E_2(v_0,m_0,n)\right]\geq \varepsilon_0$ for all $n>4m_0$. 
By passing to the limit as $n\to\infty$, we conclude that for $p=p_0$, with positive probability there exist at least two infinite open clusters and (A1) does not hold. 
\end{enumerate}

\paragraph{Comments on (A2):}
\begin{enumerate}
\setcounter{enumi}{3}
\item
It follows from the Russo-Seymour-Welsh Theorem \cite{R78,SW78} that (A2) holds for two dimensional graphs, such as $\Z^2$, considered by Kesten in \cite{KestenIIC}. 
Russo-Seymour-Welsh ideas have been recently extended to slabs in \cite{NTW15,BS15}, after the absence of percolation at criticality in slabs was proved 
by Duminil-Copin, Sidoravicius and Tassion \cite{DCST14}. 
In Lemma~\ref{l:main} of the present paper we prove that (A2) is fulfilled by slabs $\Z^2\times\{0,\ldots, k\}^{d-2}$ ($d\geq 2$, $k\geq 0$), 
thus verifying the existence and equality of the limits \eqref{eq:limits} for slabs. 
\item
We believe that assumption (A2) holds for lattices $\Z^d$ if $d<6$, but does not hold if $d>6$. 
Dimension $d_c=6$ is called the {\it upper critical dimension} above which the percolation phase transition 
should be described by mean-field theory, see, e.g., \cite{CC87}. 
This was rigorously confirmed in sufficiently high dimensions by Hara and Slade \cite{HS90, Hara}. 

\smallskip

It is easy to see that the mean-field behavior excludes (A2). Indeed, 
it is believed that above $d_c$, the two point function decays as 
\[
\P_{p_c}[x\leftrightarrow y] \asymp (1+\rho(x,y))^{2-d}.
\]
(Here $f(z)\asymp g(z)$ if for some $c$, $cf(z)\leq g(z)\leq c^{-1}f(z)$ for all $z$.)
Hara \cite{Hara} proved it rigorously in sufficiently high dimensions. 
Given this asymptotics, Aizenman showed in \cite[Theorem~4(2)]{A97} that for all $m(n)\leq n$ such that $\frac{m(n)}{n^{2/(d-4)}} \to \infty$, 
\[
\P_{p_c}\left[S(0,m(n))\leftrightarrow S(0,n)\right] \to 1,\quad\text{as }n\to \infty,
\]
and Kozma and Nachmias \cite{KN11} that $\P_{p_c}\left[0\leftrightarrow S(0,n)\right]\asymp n^{-2}$. Thus, the inequality 
\[
\P_{p_c}[0\leftrightarrow S(0,n)]\geq c\,\P_{p_c}[0\leftrightarrow S(0,m(n))]\,\P_{p_c}[S(0,m(n))\leftrightarrow S(0,n)]
\]
cannot hold for large $n$. 

\smallskip

The situation below $d_c$ is much more subtle. With the exception of $d=2$, 
where planarity helps enormously, the (near-)critical behavior below $d_c$ 
is widely unknown. Let us nevertheless give a few words about why we think (A2) should hold below $d_c$. 
It is believed that the number of clusters crossing any annulus $A(0,m,2m)$ is bounded uniformly in $m$ 
if $d<d_c$ and grows at $p=p_c$ like $m^{d-6}$ above $d_c$, with log-correction for $d=d_c$, 
and this dichotomy is intimately linked to the transition at $d_c$ from the hyperscaling to the mean-field; see \cite{Con85,BCKS}. 
Thus, it would be not unreasonable to expect that below $d_c$, 
\[
\P_p[\exists!\,\text{crossing cluster of $A(0,m,2m)$}~|~X\leftrightarrow S(0,2m)\text{ in }Z,\,Y\leftrightarrow S(0,m)\text{ in }Z]\geq c >0,
\]
which is enough to establish (A2). We are not able to prove it yet or give a simpler sufficient condition for it. 
It would already be very nice if, for instance, 
(A2) was derived from the assumption that $\P_p[\exists!\,\text{crossing cluster of $A(0,m,2m)$}]\geq c$ 
or from the assumptions of \cite{BCKS}. 
\end{enumerate}

\smallskip

We finish the introduction with a brief description of the proof of Theorem~\ref{thm:IIC}. 
Our proof follows the general scheme proposed by Kesten in \cite{KestenIIC} 
by attempting to decouple the configuration near $w$ from infinity on multiple scales. 
The implementations are however rather different. 
Using \eqref{eq:E2mnE1mn} we identify a sufficiently fast growing sequence $N_i$ such that given $w\leftrightarrow S(w,n)$, 
the probability that the annulus $A(v,N_i,N_{i+1})\subset B(w,n)$ contains a unique crossing cluster is asymptotically close to $1$; see \eqref{eq:Fi:proba}. 
Next, let an annulus $A(v,N_i,N_{i+1})$ contain a unique crossing cluster. 
We explore all the open clusters in this annulus that intersect the interior boundary $S(v,N_i)$, call their union $\mathcal C_i$, 
and let $\mathcal D_i$ be the subset of $S(v,N_{i+1}+1)$ of vertices connected by an open edge to $\mathcal C_i$; see \eqref{eq:CD}. 
Then, the configuration outside $\mathcal C_i$ is distributed as the original independent percolation 
and every vertex from $\mathcal D_i$ is connected by an edge to the same (crossing) cluster from $\mathcal C_i$. 
Thus, $w\leftrightarrow S(w,n)$ if and only if (a) $w$ is connected to $\mathcal D_i$ (this event only depends on the edges intersecting $S(v,N_i)\cup \mathcal C_i$) 
and (b) $\mathcal D_i$ is connected to $S(w,n)$ outside $\mathcal C_i$ (this only depends on the edges outside $\mathcal C_i$). 
This allows to factorize $\P_p[E,\,w\leftrightarrow S(w,n)]$; see \eqref{eq:difference:1}. 
The rest of the proof is essentially the same as that of Kesten \cite{KestenIIC}. 
We repeat the described factorization on several scales, obtaining in \eqref{eq:ratio:main} an approximation of $\P_p[E|w\leftrightarrow S(w,n)]$ 
in terms of products of positive matrices. Finally, we use (A2) to prove that the matrix operators are uniformly contracting, which 
is enough to conclude the proof; see \eqref{eq:Mratio} and the text below.

\section{Proof of Theorem~\ref{thm:IIC}}

We will prove the first claim of the theorem. The proof of the second one follows from the proof below by replacing everywhere $p$ by $p_c$. 
The general outline of the proof is the same as the original one of Kesten \cite[Theorem (3)]{KestenIIC}, but the choice of scales and the decoupling are done differently. 

\smallskip

First of all, it suffices to prove that for any $w\in V$ and a cylinder event $E$, 
\begin{equation}\label{eq:uniformconvergence}
\text{$\P_p[E|w\leftrightarrow S(w,n)]$ converges to some $\nu_p(E)$ {\it uniformly} on $[p_c,p_c+\delta]$ 
for some $\delta>0$.}
\end{equation}
Indeed, \eqref{eq:uniformconvergence} implies the existence of the first limit in \eqref{eq:limits} and that $\nu_p(E)$ is continuous. 
Since for any $p>p_c$, $\nu_p(E) = \P_p[E~|~|C(v)|=\infty]$, 
the existence of the second limit in \eqref{eq:limits} and its equality to the first one follows from the continuity of $\nu_p(E)$. 

\smallskip

Actually, by the inclusion-exclusion formula, it suffices to prove \eqref{eq:uniformconvergence} for all 
events $E$ of the form $\{\text{edges }e_1,\ldots,e_k\text{ are open}\}$. 
Although our proof could be implemented for any cylinder event $E$, calculations are neater for increasing events. 

\smallskip

Fix $w\in V$ and an increasing event $E$. 
Also fix $v\in V$ and $\delta>0$ for which the assumption (A2) is satisfied. 
Consider a sequence of scales $N_i$ such that $N_{i+1}>4N_i$ for all $i$, $B(v,N_0)$ contains $w$ and the states of its edges determine $E$. 
We will write $B_i = B(v,N_i)$, $S_i = S(v,N_i)$ and $A_i = A(v,N_i,N_{i+1})$. 
Let $F_i$ be the event that there exists a unique open crossing cluster in $A_i$. 
Define
\[
\varepsilon_i = \sup_{p\in[p_c,p_c+\delta]}\P_p\left[F_i^c~|~S_i\leftrightarrow S_{i+1}\right].
\]
By \eqref{eq:E2mnE1mn}, we can choose the scales $N_i$ so that $\varepsilon_i\to 0$ as $i\to\infty$. 

We first note that for $n>N_{i+1}+N_0$,
\begin{equation}\label{eq:Fi:proba}
\P_p[w\leftrightarrow S(w,n), F_i^c] \leq \qm^{-2}\varepsilon_i\cdot \P_p[w\leftrightarrow S(w,n)],
\end{equation}
where $\qm$ is the constant in the assumption (A2). Indeed, by independence, 
\begin{eqnarray*}
\P_p[w\leftrightarrow S(w,n), F_i^c] &\leq &\P_p[w\leftrightarrow S_i]\cdot \P_p[S_i\leftrightarrow S_{i+1}, F_i^c]\cdot \P_p[S_{i+1}\leftrightarrow S(w,n)]\\
&\leq &\varepsilon_i\cdot \P_p[w\leftrightarrow S_i]\cdot \P_p[S_i\leftrightarrow S_{i+1}]\cdot \P_p[S_{i+1}\leftrightarrow S(w,n)]\\
&\leq &\qm^{-2}\varepsilon_i\cdot \P_p\left[w\leftrightarrow S(w,n)\right],
\end{eqnarray*}
where the last inequality follows from the assumption (A2). 

\medskip

We begin to describe the main decomposition step. 
Consider the random sets
\begin{equation}\label{eq:CD}
\begin{aligned}
\mathcal C_i &= \left\{x\in B(v,N_{i+1})~:~x\leftrightarrow B(v,N_i)\text{ in }B(v,N_{i+1})\right\},\\
\mathcal D_i &= \left\{x\in S(v,N_{i+1}+1)~:~\text{$\exists\, y\in \mathcal C_i$, a neighbor of $x$, such that edge $\langle x,y\rangle$ is open}\right\}.
\end{aligned}
\end{equation}
Note that $\mathcal C_i$ contains $B(v,N_i)$, the event $\{\mathcal C_i = U\}$ depends only on the states of edges in $B(v,N_{i+1})$ with at least one end-vertex in $U$, 
and either $\{\mathcal C_i = U\}\subset F_i$ or $\{\mathcal C_i = U\}\cap F_i = \emptyset$. 
Also note that the event $\{\mathcal C_i = U,\, \mathcal D_i = R\}$ depends only on the states of edges in $B(v,N_{i+1}+1)$ with at least one end-vertex in $U$. 

For any $U\subset B(v,N_{i+1})$ and $R\subset S(v,N_{i+1}+1)$, consider the event 
\[
F_i(U,R) = \{\mathcal C_i = U,\, \mathcal D_i = R\},
\]
and let $\Pi_i$ be the collection of all such pairs $(U,R)$ that $\{\mathcal C_i = U\}\subset F_i$ and $F_i(U,R)\neq\emptyset$. 
Then $F_i = \cup_{(U,R)\in \Pi_i} F_i(U,R)$, and for all $n>N_{i+1}+N_0$, 
\begin{multline*}
\P_p\left[E,w\leftrightarrow S(w,n), F_i\right]
= \sum_{(U,R)\in \Pi_i}\P_p\left[E,w\leftrightarrow S(w,n), F_i(U,R)\right]\\
= \sum_{(U,R)\in \Pi_i}\P_p\left[E,w\leftrightarrow S_{i+1}, F_i(U,R)\right]\cdot\P_p\left[R\leftrightarrow S(w,n)\text{ in }B(w,n)\setminus U\right].
\end{multline*}
Together with \eqref{eq:Fi:proba}, this gives the inequality
\begin{multline}\label{eq:difference:1}
\Big|\P_p\left[E,w\leftrightarrow S(w,n)\right] - 
\sum_{(U,R)\in \Pi_i}\P_p\left[E,w\leftrightarrow S_{i+1}, F_i(U,R)\right]\cdot\P_p\left[R\leftrightarrow S(w,n)\text{ in }B(w,n)\setminus U\right]\Big|\\
\leq \qm^{-2}\varepsilon_i\cdot \P_p[w\leftrightarrow S(w,n)]
\leq \frac{\qm^{-2}\varepsilon_i}{\P_{p_c}[E]}\cdot \P_p[E,w\leftrightarrow S(w,n)],
\end{multline}
where the last step follows from the FKG inequality, since $E$ is increasing. 
Define the constant $C_* = (\qm^2\,\P_{p_c}[E])^{-1}$ and for $(U,R)\in\Pi_i$, let
\begin{align*}
u_p'(U,R) &= \P_p\left[E,w\leftrightarrow S_{i+1}, F_i(U,R)\right],\\
u_p''(U,R) &= \P_p\left[w\leftrightarrow S_{i+1}, F_i(U,R)\right],\\
\gamma_p(U,R,n) &= \P_p\left[R\leftrightarrow S(w,n)\text{ in }B(w,n)\setminus U\right].
\end{align*}
In this notation, \eqref{eq:difference:1} becomes
\[
\left(1 - C_*\varepsilon_i\right)\,\P_p\left[E,w\leftrightarrow S(w,n)\right]
\leq
\sum_{(U,R)\in \Pi_i}u_p'(U,R)\,\gamma_p(U,R,n)
\leq
\left(1 + C_*\varepsilon_i\right)\,\P_p\left[E,w\leftrightarrow S(w,n)\right]
\]
and by replacing $E$ above with the sure event, we also get
\[
\left(1 - C_*\varepsilon_i\right)\,\P_p\left[w\leftrightarrow S(w,n)\right]
\leq
\sum_{(U,R)\in \Pi_i}u_p''(U,R)\,\gamma_p(U,R,n)
\leq
\left(1 + C_*\varepsilon_i\right)\,\P_p\left[w\leftrightarrow S(w,n)\right].
\]

\medskip

Now we iterate. Let $(U,R)\in \Pi_i$. We can apply a similar reasoning as in \eqref{eq:Fi:proba} and \eqref{eq:difference:1} 
to $\gamma_p(U,R,n)$ and obtain that for any $j>i+2$ and $n> N_{j+1}+N_0$, 
\begin{multline}\label{eq:difference:2}
\Big|\gamma_p(U,R,n) - 
\sum_{(U',R')\in \Pi_j}\P_p\left[R\leftrightarrow S_{j+1}\text{ in }B_{j+1}\setminus U, F_{j-1}, F_j(U',R')\right]
\cdot\gamma_p(U',R',n)\Big|\\
\leq \qm^{-2}(\varepsilon_{j-1} + \varepsilon_j)\cdot \gamma_p(U,R,n).
\end{multline}
For $j> i+2$, $(U,R)\in \Pi_i$ and $(U',R')\in \Pi_j$, define 
\[
M_p(U,R;\,U',R') = \P_p\left[R\leftrightarrow S_{j+1}\text{ in }B_{j+1}\setminus U, F_{j-1}, F_j(U',R')\right].
\]
Then \eqref{eq:difference:2} becomes
\begin{multline*}
(1- \qm^{-2}\,(\varepsilon_{j-1} + \varepsilon_j))\,\gamma_p(U,R,n)
\leq
\sum_{(U',R')\in \Pi_j}M_p(U,R;\,U',R')\,\gamma_p(U',R',n)\\
\leq 
(1+ \qm^{-2}\,(\varepsilon_{j-1} + \varepsilon_j))\,\gamma_p(U,R,n).
\end{multline*}
Iterating further gives that for any $\varepsilon>0$ and $s\in\N$, there exist indices $i_1,\ldots, i_s$ such that $i_{k+1}>i_k + 2$ and 
for all $n> N_{i_s+1} + N_0$, 
\begin{multline}\label{eq:ratio:main}
e^{-\varepsilon}\,\P_p\left[E\,|\,w\leftrightarrow S(w,n)\right]\leq\\
\frac{\sum\, u_p'(U_1,R_1)\,M_p(U_1,R_1;\,U_2,R_2)\ldots M_p(U_{s-1},R_{s-1};, U_s,R_s)\,\gamma_p(U_s,R_s,n)}
{\sum\,u_p''(U_1,R_1)\,M_p(U_1,R_1;\,U_2,R_2)\ldots M_p(U_{s-1},R_{s-1};, U_s,R_s)\,\gamma_p(U_s,R_s,n)}\\
\leq e^{\varepsilon}\,\P_p\left[E\,|\,w\leftrightarrow S(w,n)\right],
\end{multline}
where the two sums are over $(U_1,R_1)\in \Pi_{i_1},\ldots, (U_s,R_s)\in \Pi_{i_s}$.

We will prove that (A2) implies that there exists $\kappa$ such that for all $i$, $j>i+2$, all pairs $(U_1,R_1), (U_2,R_2)\in \Pi_i$, $(U_1',R_1'), (U_2',R_2')\in \Pi_j$, 
and all $p\in[p_c,p_c+\delta]$, 
\begin{equation}\label{eq:Mratio}
\frac{M_p(U_1,R_1;\,U_1',R_1')\,M_p(U_2,R_2;\,U_2',R_2')}{M_p(U_1,R_1;\,U_2',R_2')\,M_p(U_2,R_2;\,U_1',R_1')}\leq \kappa^2.
\end{equation}
(This is an analogue of \cite[Lemma (23)]{KestenIIC}.) If so, then we can use Hopf's contraction property of multiplication by positive matrices 
as in \cite[pages 377-378]{KestenIIC}\footnotemark[1]\footnotetext[1]{There is a mathematical typo in the first inequality on \cite[page 378]{KestenIIC} -- $\mathrm{osc}(u',u'')$ is missing. 
However, one can show using RSW techniques that the missing term there is bounded from above by a constant independent of $j_1$, and the remaining argument goes through. 
In our case, the situation is simpler, since for our choice of $u'$ and $u''$, $\mathrm{osc}(u',u'')\leq 1$.}
to conclude from \eqref{eq:ratio:main} that there exists $\xi\leq 1$, which depends on $E$, $p$, and the scales $i_1,\ldots, i_s$, such that 
for all $n> N_{i_s+1} + N_0$, 
\begin{equation}\label{eq:inequality:xi}
e^{-\varepsilon}\left(\xi - \left(\frac{\kappa - 1}{\kappa + 1}\right)^{s-1}\right)
\leq 
\P_p\left[E\,|\,w\leftrightarrow S(w,n)\right]
\leq 
e^{\varepsilon}\left(\xi + \left(\frac{\kappa - 1}{\kappa + 1}\right)^{s-1}\right).
\end{equation}
It follows from \eqref{eq:inequality:xi} and the fact that $\xi\leq 1$ that for any $m,n> N_{i_s+1} + N_0$ and $p\in[p_c,p_c+\delta]$, 
\[
\Big|\P_p\left[E\,|\,w\leftrightarrow S(w,m)\right] - \P_p\left[E\,|\,w\leftrightarrow S(w,n)\right]\Big|\\
\leq
\left(e^{\varepsilon} - e^{-\varepsilon}\right) + \left(e^{\varepsilon} + e^{-\varepsilon}\right)\,\left(\frac{\kappa - 1}{\kappa + 1}\right)^{s-1},
\]
which implies \eqref{eq:uniformconvergence}. 

\medskip

It remains to prove \eqref{eq:Mratio}. Let $j>i+2$. Consider the random sets
\begin{align*}
\mathcal X_j &= \left\{x\in A_{j-1}~:~x\leftrightarrow S_j\text{ in }A_{j-1}\right\},\\
\mathcal Y_j &= \left\{y\in S(v,N_{j-1}-1)~:~\text{$\exists\, x\in \mathcal X_j$, a neighbor of $y$, such that the edge $\langle x,y\rangle$ is open}\right\}.
\end{align*}
Note that $\mathcal X_j$ contains $S_j$, the event $\{\mathcal X_j = X\}$ depends only on the states of edges in $A_{j-1}$ with at least one end-vertex in $X$, 
and either $\{\mathcal X_j = X\}\subset F_{j-1}$ or $\{\mathcal X_j = X\}\cap F_{j-1} = \emptyset$. 
Also note that the event $\{\mathcal X_j = X,\, \mathcal Y_j = Y\}$ depends only on the states of edges in $B_j$ with at least one end-vertex in $X$. 
For any $X\subset A_{j-1}$ and $Y\subset S(v,N_{j-1}-1)$, consider the event 
\[
G_j(X,Y) = \{\mathcal X_j = X,\, \mathcal Y_j = Y\},
\]
and let $\Gamma_j$ be the collection of all such pairs $(X,Y)$ that $\{\mathcal X_j = X\}\subset F_{j-1}$ and $G_j(X,Y)\neq\emptyset$. 
Then $F_{j-1} = \cup_{(X,Y)\in \Gamma_j} G_j(X,Y)$ and for any $(U,R)\in\Pi_i$, $(U',R')\in\Pi_j$,
\[
M_p(U,R;\,U',R') = 
\sum_{(X,Y)\in \Gamma_j} \P_p\left[R\leftrightarrow Y \text{ in }B_j\setminus (X\cup U)\right]\cdot 
\P_p\left[G_j(X,Y), F_j(U',R'), Y\leftrightarrow R'\right].
\]
By the assumption (A2), 
\begin{multline*}
\P_p\left[R\leftrightarrow Y \text{ in }B_j\setminus (X\cup U)\right]\\
\geq \qm\cdot\P_p\left[R\leftrightarrow S(v,2N_{i+1}) \text{ in }B(v,2N_{i+1})\setminus U\right]
\cdot 
\P_p\left[S(v,2N_{i+1})\leftrightarrow Y \text{ in }B_j\setminus X\right]\\
\geq 
\qm\cdot\P_p\left[R\leftrightarrow Y \text{ in }B_j\setminus (X\cup U)\right].
\end{multline*}
This easily implies \eqref{eq:Mratio} with $\kappa = \qm^{-1}$.
The proof of Theorem~\ref{thm:IIC} is complete. 
\qed

\medskip

\begin{remark}\label{rem:JaraiIIC}
Instead of conditioning on the events $\{w\leftrightarrow S(w,n)\}$, one could condition on $\{w\leftrightarrow Y_n\text{ in }Z_n\}$, 
where $Z_n\supset B(w,n)$ and $Y_n\subseteq Z_n\setminus B(w,n)$, and obtain the same limits as in \eqref{eq:limits}. 
This is immediate after observing that $\P_p[E|w\leftrightarrow Y_n\text{ in }Z_n]$ satisfies inequalities \eqref{eq:inequality:xi} 
with the same $\xi$.
\end{remark}

\bigskip
\bigskip
\bigskip

\section{Quasi-multiplicativity for slabs}

In this seciton we prove that the assumption (A2) is fulfilled by slabs $\Z^2\times\{0,\ldots,k\}^{d-2}$ for any $d\geq 2$ and $k\geq 0$ and 
for any $\delta>0$ such that $p_c+\delta<1$, thus proving
\begin{theorem}\label{thm:slabs}
The two limits in \eqref{eq:limits} exist and coincide for $\Z^2\times\{0,\ldots,k\}^{d-2}$ ($d\geq 2$, $k\geq 0$).
\end{theorem}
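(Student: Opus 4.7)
By Theorem~\ref{thm:IIC}, it suffices to verify assumptions (A1) and (A2) for the slab $\Z^2\times\{0,\ldots,k\}^{d-2}$, some $v$, and some $\delta>0$ with $p_c+\delta<1$. Assumption (A1) is the classical Burton--Keane uniqueness of the infinite cluster, valid on amenable vertex-transitive graphs including slabs; see Comment~1 and \cite{BS96}. The substance of the theorem is therefore the verification of (A2), which I would isolate as a separate Lemma~\ref{l:main}.

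The plan for (A2) is to combine the slab Russo--Seymour--Welsh (RSW) theory developed by Newman--Tassion--Wu \cite{NTW15} and Basu--Sapozhnikov \cite{BS15} --- built on the Duminil-Copin--Sidoravicius--Tassion theorem \cite{DCST14} that $\theta_{\mathrm{slab}}(p_c)=0$ --- with a gluing argument on the annulus $A(v,m,4m)$. The RSW input provides, uniformly in $p\in[p_c,p_c+\delta]$ and in the scale, a constant lower bound on crossing probabilities of slab-boxes of bounded aspect ratio. From this I would extract a slab \emph{net} estimate: there exists $c_{\mathrm{net}}>0$ such that, uniformly in $m$ and in $p\in[p_c,p_c+\delta]$, with probability at least $c_{\mathrm{net}}$ the annulus $A(v,m,4m)$ contains an open cluster $\mathcal N$ meeting every open path joining $S(v,m)$ to $S(v,4m)$. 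The net is assembled from $O(1)$ overlapping slab-rectangle crossings wrapping around $S(v,2m)$, merged across the thickness of the slab via finite-energy connectors, which costs only an $O(1)$ factor because $k$ is fixed.

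Given the net, inequality \eqref{eq:A2} follows by applying the FKG inequality to three increasing events supported (essentially) in disjoint regions: the inner arm $\{X\leftrightarrow S(v,2m)\text{ in }Z\cap B(v,2m)\}$, the outer arm $\{Y\leftrightarrow S(v,2m)\text{ in }Z\setminus B(v,2m-1)\}$, and the net event inside $A(v,m,4m)\subset Z$. Each arm event coincides with the corresponding arm in $Z$, since any path from $X\subset B(v,m)$ to $S(v,2m)$ staying in $Z$ must lie in $B(v,2m)$ up to its first hit of $S(v,2m)$, and analogously for $Y$. On the intersection of the three events, both arms terminate on $S(v,2m)\subset A(v,m,4m)$ and are therefore open-connected to $\mathcal N$, so $X\leftrightarrow Y$ in $Z$. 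FKG then yields \eqref{eq:A2} with $\qm = c_{\mathrm{net}}$.

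The main obstacle is the construction of the net in the slab setting, where planar duality is unavailable and one cannot invoke a planar circuit enclosing $B(v,m)$. The workaround uses the thickened structure: an annular crossing in a single $2$-dimensional sheet of the slab does not by itself separate the interior of $B(v,m)$ from the exterior of $B(v,4m)$, but an open cluster formed by such a crossing together with finite-energy connectors linking the $(d-2)$ thickened coordinates does intersect every open path traversing $A(v,m,4m)$. Extracting this separation property cleanly from the box-crossing estimates of \cite{NTW15,BS15}, together with checking that all constructions can be performed inside $Z$ (which by hypothesis contains $A(v,m,4m)$) and uniformly in $p\in[p_c,p_c+\delta]$, is where the technical bookkeeping of the proof concentrates.
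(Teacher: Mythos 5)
Your reduction to Theorem~\ref{thm:IIC} plus (A1) and (A2), and your identification of the slab RSW results of \cite{NTW15,BS15} as the key input, match the paper's route (the paper isolates (A2) as Lemma~\ref{l:main}). But the central step is not what you describe, and your version of it has a genuine gap. You posit an open ``net'' cluster $\mathcal N$ in the annulus that meets every \emph{open} crossing path, with probability bounded below uniformly in $m$, built from $O(1)$ rectangle crossings ``merged across the thickness via finite-energy connectors''. This hides the actual difficulty. In a slab, an open planar circuit $\Gamma$ around $Q(2m)$ (whose existence \emph{is} an increasing event of uniformly positive probability, by \cite{NTW15}) only guarantees that every crossing path passes through the column $\overline\Gamma$ of sites lying above $\Gamma$; the path need not touch $\Gamma$ itself, and the points where the arms from $X$ and $Y$ hit $\overline\Gamma$ are configuration-dependent and unboundedly many as $m\to\infty$. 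So one cannot pre-install $O(1)$ connectors to turn $\Gamma$ into a cluster intersecting every open crossing; moreover, the resulting ``blocking net'' event is not increasing (adding open edges can create new crossings avoiding $\mathcal N$), so your concluding FKG application is not justified either. ``Finite energy'' is not free inside a single probability estimate: it must be implemented as a local modification map with a bounded-multiplicity (reconstruction) bound, and that is exactly where the work lies.

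The paper's proof of Lemma~\ref{l:main} proceeds differently at this point. By FKG, $\P_p[X\leftrightarrow\partial Q(3m)\text{ in }Z,\;Y\leftrightarrow\partial Q(2m)\text{ in }Z,\;E]\geq c\,\P_p[X\leftrightarrow\partial Q(3m)\text{ in }Z]\cdot\P_p[Y\leftrightarrow\partial Q(2m)\text{ in }Z]$, where $E$ is the (increasing) event that an open circuit around $Q(2m)$ exists in $\an(2m,3m)$. On this event both arms necessarily meet the column $\overline\Gamma$ of the minimal such circuit $\Gamma$. The proof then splits according to whether each arm is already open-connected to $\Gamma$ itself, and in each case constructs an explicit map $f$ into $\{X\leftrightarrow Y\text{ in }Z\}$ that closes all edges in one or two columns $\overline{\{u\}}$, $\overline{\{v\}}$ except for the chosen arm-paths and $\Gamma$, and opens shortest paths inside those columns joining $u$, $v$ and $\Gamma$. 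The bounded number of modified edges (at most $4d(k+1)^{d-2}$) together with the reconstructibility of the modified columns (no new circuits are created, so the surgery site can be identified in $f(\omega)$) yields the comparison $\P_p[\,\cdot\,]\leq C\,\P_p[X\leftrightarrow Y\text{ in }Z]$ with bounded multiplicity. This surgery argument, not a net-plus-FKG argument, is the technical core of the theorem; your proposal would need to be rebuilt around it.
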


\smallskip

Fix $d\geq 2$ and $k\geq 0$ and define $\slab = \Z^2\times\{0,\ldots,k\}^{d-2}$. 
For positive integers $m\leq n$, let $Q(n) = [-n,n]^2\times\{0,\ldots,k\}^{d-2}$ be the box of side length $2n$ in $\slab$ centered at $0$, 
$\partial Q(n) = Q(n)\setminus Q(n-1)$ the inner boundary of $Q(n)$, and $\an(m,n) = Q(n)\setminus Q(m-1)$ the annulus of side lengths $2m$ and $2n$. 
We will prove the following lemma. 
\begin{lemma}\label{l:main}
Let $d\geq 2$ and $k\geq 0$. Let $\delta>0$ such that $p_c + \delta<1$. 
There exists $c>0$ such that for any $p\in[p_c,p_c+\delta]$, integer $m>0$, 
any finite connected $Z\subset \slab$ such that $Z\supseteq \an(m,3m)$, and any $X\subset Z\cap Q(m)$ and $Y\subset Z\setminus Q(3m)$, 
\begin{equation}\label{eq:main}
\P_p[X\leftrightarrow Y\text{ in }Z] \geq c\cdot \P_p[X\leftrightarrow \partial Q(2m)\text{ in }Z]\cdot \P_p[Y\leftrightarrow \partial Q(2m)\text{ in }Z].
\end{equation}
\end{lemma}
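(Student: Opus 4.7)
The plan is to exhibit an increasing ``gluing event'' $H$, measurable with respect to the edges of $\an(m,3m)\subseteq Z$, such that $\P_p[H]\ge c_0>0$ uniformly in $m\ge 1$ and $p\in[p_c,p_c+\delta]$, and such that $E_1\cap E_2\cap H\subseteq\{X\leftrightarrow Y\text{ in }Z\}$, where $E_1=\{X\leftrightarrow\partial Q(2m)\text{ in }Z\}$ and $E_2=\{Y\leftrightarrow\partial Q(2m)\text{ in }Z\}$ are the two one-arm events on the right-hand side of \eqref{eq:main}. Since $E_1,E_2,H$ are all increasing in the percolation configuration, the Harris--FKG inequality would then give $\P_p[X\leftrightarrow Y\text{ in }Z]\ge\P_p[E_1]\,\P_p[E_2]\,\P_p[H]\ge c_0\,\P_p[E_1]\,\P_p[E_2]$, which is exactly \eqref{eq:main}.

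The event $H$ I would use is the existence of an open cluster $\mathcal K$ with vertex set in $\an(m,3m)$ whose removal from $\slab$ disconnects both $Q(m)$ from $\partial Q(2m)$ and $\partial Q(2m)$ from $\slab\setminus Q(3m)$. Equivalently, $\mathcal K$ contains a ``slab separator'' of $Q(m)$ lying in $\an(m,2m-1)$ and a ``slab separator'' of $Q(2m+1)$ lying in $\an(2m+1,3m)$, with the two pieces forming a single open cluster. Then $H$ is increasing and depends only on edges of $\an(m,3m)\subseteq Z$. On $E_1\cap H$, any open path $\pi$ in $Z$ from $X\subseteq Q(m)$ to $\partial Q(2m)$ must meet $\mathcal K$, for otherwise $\pi$ would lie entirely in $\slab\setminus\mathcal K$ and connect two sets that $\mathcal K$ separates in $\slab$. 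Since $\mathcal K\subseteq\an(m,3m)\subseteq Z$, this produces an open connection from $X$ to $\mathcal K$ inside $Z$, and the analogous argument on $E_2\cap H$ connects $Y$ to the same cluster $\mathcal K$, whence $X\leftrightarrow Y$ in $Z$.

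For the lower bound on $\P_p[H]$ I would invoke the slab RSW theory of \cite{NTW15,BS15} (itself built on the critical non-percolation result \cite{DCST14}). Split $\an(m,3m)$ into three concentric sub-annuli $A^{(1)}=\an(m,\lfloor 3m/2\rfloor)$, $A^{(2)}=\an(\lfloor 3m/2\rfloor+1,\lfloor 5m/2\rfloor)$, $A^{(3)}=\an(\lfloor 5m/2\rfloor+1,3m)$, and define three increasing sub-events: existence of a slab separator in $A^{(1)}$, existence of a slab separator in $A^{(3)}$, and existence of a radial open crossing of $A^{(2)}$ connecting the two separators into a single cluster. Each sub-event I would realise as an FKG concatenation of a bounded number (depending only on $d,k$) of slab-rectangle crossings of bounded aspect ratio. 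Slab RSW supplies a uniform lower bound on each such crossing probability on $[p_c,p_c+\delta]$ (the restriction $p_c+\delta<1$ rules out degenerate boundary effects at $p=1$), so a final application of FKG yields $\P_p[H]\ge c_0>0$.

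The main obstacle is the construction of a slab separator itself: it must be a genuinely $(d-1)$-dimensional ``wall'' inside the slab annulus, not merely a $2$-dimensional circuit on a single horizontal layer $\Z^2\times\{z\}$. Such a 2D circuit fails to disconnect the slab, because a path may bypass it through a different layer $\Z^2\times\{z'\}$; the separator must therefore span every vertical coordinate in $\{0,\ldots,k\}^{d-2}$. The natural realisation is to patch together slab rectangle-crossings (each of which already contains the full vertical extent) along the four sides of the annulus and to glue corners via FKG. I expect that this construction is either already given in \cite{NTW15,BS15} or else extracted from the crossing estimates there by a short FKG argument; once it is in hand, the remainder of the proof is a formal exercise with increasing events and FKG.
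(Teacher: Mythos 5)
There is a genuine gap, and you have correctly located it yourself: the existence of the ``slab separator'' event $H$ with $\P_p[H]\geq c_0>0$ uniformly in $m$ is not supplied by the slab RSW theory of \cite{NTW15,BS15}, and your proposed construction of it does not work. Those papers give uniform lower bounds on probabilities of open \emph{crossings} of slab rectangles and of open \emph{circuits} (closed paths whose $\Z^2$-projection winds around the annulus). An open crossing of a slab rectangle is a single path; it does not ``contain the full vertical extent'' in any blocking sense, and it does not disconnect the two long sides of the rectangle, because a transversal path can dodge it by switching to a different layer $\Z^2\times\{z'\}$ at the would-be crossing point. Consequently, patching such crossings together produces an open circuit, not a separating surface. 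A genuine open separating cluster in $\an(m,3m)$ would have to contain an entire $(d-1)$-dimensional blocking set of vertices, and no uniform lower bound on the probability of such an object is known (this is precisely the obstruction that makes gluing arguments in slabs hard, and it is why planar duality-based proofs do not transfer). So the FKG step $\P_p[X\leftrightarrow Y]\geq\P_p[E_1]\P_p[E_2]\P_p[H]$ is built on an event whose probability you cannot bound below.

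The paper's proof is designed exactly to circumvent this. It takes the event $E$ that there is an open circuit $\Gamma$ around $Q(2m)$ in $\an(2m,3m)$ (uniformly positive probability by \cite{NTW15}), applies FKG to get $\P_p[E_1\cap E_2\cap E]\geq c\,\P_p[E_1]\P_p[E_2]$, and then observes that the vertical closure $\overline\Gamma$ (the union of all columns over the projection of $\Gamma$) \emph{does} separate, so that on $E_1\cap E_2\cap E$ both $X$ and $Y$ are connected in $Z$ to $\overline\Gamma$. But $\overline\Gamma$ is not an open cluster, so $X$ and $Y$ need not be connected to each other; the paper repairs this with a local modification (surgery) argument: on $\{X\nleftrightarrow Y\}$ one rewires a bounded number of edges (at most $4d(k+1)^{d-2}$, all inside one or two columns $\overline{\{u\}}$, $\overline{\{v\}}$) to route $X$ and $Y$ into the genuinely open circuit $\Gamma$, via a bounded-to-one map into $\{X\leftrightarrow Y\text{ in }Z\}$. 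This costs only a constant factor $\min(p_c,1-p_c-\delta)^{-D}$, which is where the hypothesis $p_c+\delta<1$ is actually used. If you want to salvage your approach, you would have to either prove the uniform positive probability of an open separating cluster in a slab annulus (an open problem, and strictly stronger than what is needed) or replace the purely FKG gluing by such a surgery step.
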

To see that Lemma~\ref{l:main} implies (A2), note that it suffices to prove \eqref{eq:A2} for $m\geq m_0$ and sufficiently large $m_0$. 
One can choose $m_0 = m_0(d,k)$ large enough so that $A(0,m,4m)\supset \an(m, 3m)$. Thus, Lemma~\ref{l:main} implies (A2).

\begin{proof}[Proof of Lemma~\ref{l:main}]
Instead of \eqref{eq:main}, it suffices to prove that there exists $c>0$ such that for any $m>0$, 
any finite connected $Z\subset \slab$ such that $Z\supseteq \an(2m, 3m)$, and any $X\subset Z\cap Q(2m)$ and $Y\subset Z\setminus Q(3m)$, 
\begin{equation}\label{eq:main:proof}
\P_p[X\leftrightarrow Y\text{ in }Z] \geq c\cdot \P_p[X\leftrightarrow \partial Q(3m)\text{ in }Z]\cdot \P_p[Y\leftrightarrow \partial Q(2m)\text{ in }Z].
\end{equation}
Indeed, for $Z$ as in the statement of the lemma, by \eqref{eq:main:proof}, 
\[
\P_p[X\leftrightarrow \partial Q(3m)\text{ in }Z]\geq c\cdot \P_p[X\leftrightarrow \partial Q(2m)\text{ in }Z]\cdot \P_p[\partial Q(\frac43 m)\leftrightarrow \partial Q(3m)\text{ in }Z], 
\]
and $\P_p[\partial Q(\frac43 m)\leftrightarrow \partial Q(3m)\text{ in }Z] \geq \P_{p_c}[\partial Q(\frac43 m)\leftrightarrow \partial Q(3m)]\geq c>0$, as proved in \cite{BS15,NTW15}.

\medskip

We proceed to prove \eqref{eq:main:proof}. 
Let $E$ be the event that there exists an open circuit (nearest neighbor path with the same start and end points) 
around $Q(2m)$ contained in $\an(2m,3m)$. It is shown in \cite{NTW15} that $\P_p[E]\geq \P_{p_c}[E]>c>0$ for some $c>0$ independent of $m$. 
Thus, by the FKG inequality, 
\begin{multline*}
\P_p[X\leftrightarrow \partial Q(3m)\text{ in }Z,~Y\leftrightarrow \partial Q(2m)\text{ in }Z,~ E]\\ 
\geq c\cdot \P_p[X\leftrightarrow \partial Q(3m)\text{ in }Z]
\cdot \P_p[Y\leftrightarrow \partial Q(2m)\text{ in }Z].
\end{multline*}
Consider an arbitrary deterministic ordering of all circuits in $\slab$, and for a configuration in $E$, let $\Gamma$ be the minimal (with respect to this ordering) 
open circuit around $Q(2m)$ contained in $\an(2m,3m)$. 
For $W\subset\slab$, let 
\[
\overline W = \{z=(z_1,\ldots,z_d)\in\slab~:~(z_1,z_2,x_3,\ldots,x_d)\in W\text{ for some $x_3,\ldots,x_d$ }\}.
\]
Note that 
\[
\P_p[X\leftrightarrow \partial Q(3m)\text{ in }Z,~Y\leftrightarrow \partial Q(2m)\text{ in }Z,~ E]\leq 
\P_p[X\leftrightarrow \overline\Gamma\text{ in }Z,~Y\leftrightarrow \overline\Gamma\text{ in }Z,~ E].
\]
Thus, to prove \eqref{eq:main:proof}, it suffices to show that for some $C<\infty$, 
\[
\P_p[X\leftrightarrow \overline\Gamma\text{ in }Z,~Y\leftrightarrow \overline\Gamma\text{ in }Z,~ E]\leq C\cdot \P_p[X\leftrightarrow Y\text{ in }Z].
\]
This will be achieved using local modification arguments similar to those in \cite{NTW15}. 
In fact, for the above inequality to hold, it suffices to show that for some $C<\infty$, 
\begin{equation}\label{eq:main:glue}
\P_p[X\leftrightarrow \overline\Gamma\text{ in }Z,~Y\leftrightarrow \overline\Gamma\text{ in }Z,~ E, X\nleftrightarrow Y\text{ in }Z]\leq C\cdot \P_p[X\leftrightarrow Y\text{ in }Z].
\end{equation}
We write the event in the left hand side of \eqref{eq:main:glue} as the union of three subevents satisfying additionally 
\begin{itemize}\itemsep0pt
\item[(a)]
$X\nleftrightarrow \Gamma\text{ in }Z$, $Y\nleftrightarrow \Gamma\text{ in }Z$, 
\item[(b)]
$X\nleftrightarrow \Gamma\text{ in }Z$, $Y\leftrightarrow \Gamma\text{ in }Z$, 
\item[(c)]
$X\leftrightarrow \Gamma\text{ in }Z$, $Y\nleftrightarrow \Gamma\text{ in }Z$. 
\end{itemize}
It suffices to prove that the probability of each of the three subevents can be bounded from above by $C\cdot \P_p[X\leftrightarrow Y\text{ in }Z]$. 
The cases (b) and (c) can be handled similarly, thus we only consider (a) and (b). 

\paragraph{Case (a):} We prove that for some $C<\infty$, 
\begin{equation}\label{eq:main:glue:a}
\P_p\left[
\begin{array}{c}
X\leftrightarrow \overline\Gamma\text{ in }Z,~Y\leftrightarrow \overline\Gamma\text{ in }Z,~ E, X\nleftrightarrow Y\text{ in }Z\\
X\nleftrightarrow \Gamma\text{ in }Z, Y\nleftrightarrow \Gamma\text{ in }Z 
\end{array}
\right]\leq C\cdot \P_p[X\leftrightarrow Y\text{ in }Z].
\end{equation}
Denote by $E_a$ the event on the left hand side. 
It suffices to construct a map $f:E_a\to\{X\leftrightarrow Y\text{ in }Z\}$ such that for some constant $D<\infty$, 
(1) for each $\omega\in E_a$, $\omega$ and $f(\omega)$ differ in at most $D$ edges, 
(2) at most $D$ $\omega$'s can be mapped to the same configuration, i.e., for each $\omega\in E_a$, $|\{\omega'\in E_a:f(\omega') = f(\omega)\}|\leq D$. 
If so, the desired inequality is satisfied with $C = \frac{D}{\min(p_c,1-p_c-\delta))^D}$. 

Take a configuration $\omega\in E_a$. 
Let $U$ be the set of all points $u\in\overline\Gamma$ such that $u$ is connected to $X$ in $Z$ by an open self-avoiding path  
that from the first step on does not visit $\overline{\{u\}}$. For each $u\in U$, choose one such open self-avoiding path and denote it by $\pi_u$. 
Similarly, let $V$ be the set of all points $v\in\overline\Gamma$ such that $v$ is connected to $Y$ in $Z$ by an open self-avoiding path  
that from the first step on does not visit $\overline{\{v\}}$. For each $v\in V$, choose one such open self-avoiding path and denote it by $\pi_v$.

\smallskip

Assume first that we can choose $u\in U$ and $v\in V$ such that $\overline{\{u\}}=\overline{\{v\}}$. 
For such $\omega$'s, the configuration $f(\omega)$ is defined as follows. 
We 
\begin{itemize}\itemsep0pt
\item[(a)] 
close all the edges with an end-vertex in $\overline{\{u\}}$ except for the (unique) edge of $\pi_u$, the (unique) edge of $\pi_v$, and the edges belonging to $\Gamma$, 
\item[(b)] 
open all the edges in $\overline{\{u\}}$ that belong to a shortest path $\rho$ (line segment if $d=3$) between $u$ and $\Gamma$ in $\overline{\{u\}}$, 
\item[(c)] 
open all the edges in $\overline{\{u\}}$ that belong to a shortest path between $v$ and $\Gamma\cup\rho$ in $\overline{\{u\}}$. 
\end{itemize}
Notice that $\omega$ and $f(\omega)$ differ in at most $2d\,(k+1)^{d-2}$ edges. Moreover, 
since $u$, $v$, and $\Gamma$ are all in different open clusters in $\omega$, 
after connecting them by simple open paths as in (b) and (c), no new open circuits are created. Thus, 
the set $\overline{\{u\}}$ can be uniquely reconstructed in $f(\omega)$ as the unique set of the form $\overline{\{z\}}$ 
where $X$ (and $Y$) is connected to $\Gamma$. 

\smallskip

Assume next that $\overline U\cap \overline V = \emptyset$. Choose $u\in U$ and $v\in V$. 
Note that $\overline{\{u\}}$ is not connected to $Y$ in $Z$ and $\overline{\{v\}}$ is not connected to $X$ in $Z$. The configuration $f(\omega)$ is defined as follows. We 
\begin{itemize}\itemsep0pt
\item[(a)] 
close all the edges with an end-vertex in $\overline{\{u\}}\cup\overline{\{v\}}$ except for the edges of $\pi_u$, $\pi_v$, and $\Gamma$, 
\item[(b)] 
open all the edges in $\overline{\{u\}}$ that belong to a shortest path between $u$ and $\Gamma$ in $\overline{\{u\}}$, 
\item[(c)] 
open all the edges in $\overline{\{v\}}$ that belong to a shortest path between $v$ and $\Gamma$ in $\overline{\{v\}}$. 
\end{itemize}
Notice that $\omega$ and $f(\omega)$ differ in at most $4d\,(k+1)^{d-2}$ edges. 
Step (a) of the construction does not alter the paths $\pi_u$ and $\pi_v$. 
Finally, since $u$, $v$, and $\Gamma$ are all in different open clusters in $\omega$, 
after connecting $u$, $v$, and $\Gamma$ by simple open paths as in (b) and (c), no new open circuits are created. Thus, 
the set $\overline{\{u\}}\cup\overline{\{v\}}$ can be uniquely reconstructed in $f(\omega)$ as 
the unique such set where $X$ and $Y$ are connected to $\Gamma$.

The constructed function $f$ satisfies the requirement (1) with $D = 4d\,(k+1)k^{d-2}$ and the requirement (2) with $D = 2^{4d\,(k+1)^{d-2}}$. 
The proof of \eqref{eq:main:glue:a} is complete.

\paragraph{Case (b):} We prove that for some $C<\infty$, 
\begin{equation}\label{eq:main:glue:b}
\P_p\left[
\begin{array}{c}
X\leftrightarrow \overline\Gamma\text{ in }Z,~Y\leftrightarrow \overline\Gamma\text{ in }Z,~ E, X\nleftrightarrow Y\text{ in }Z\\
X\nleftrightarrow \Gamma\text{ in }Z, Y\leftrightarrow \Gamma\text{ in }Z 
\end{array}
\right]\leq C\cdot \P_p[X\leftrightarrow Y\text{ in }Z].
\end{equation}
Denote by $E_b$ the event on the left hand side. As in Case (a), \eqref{eq:main:glue:b} will follow if we 
construct a map $f:E_b\to\{X\leftrightarrow Y\text{ in }Z\}$ such that for some constant $D<\infty$, 
(1) for each $\omega\in E_b$, $\omega$ and $f(\omega)$ differ in at most $D$ edges, 
(2) at most $D$ $\omega$'s are mapped to the same configuration.

Take a configuration $\omega\in E_b$. 
Let $U$ be the set of all points $u\in\overline\Gamma$ such that $u$ is connected to $X$ in $Z$ by an open self-avoiding path 
that from the first step on does not visit $\overline{\{u\}}$. 
For each $u\in U$, choose one such open self-avoiding path and denote it by $\pi_u$.

\smallskip

We first assume that there exists $u\in U$ such that $Y$ is connected to $\Gamma$ in $Z\setminus\overline{\{u\}}$. 
For such $\omega$'s, we define $f(\omega)$ as follows. We 
\begin{itemize}\itemsep0pt
\item[(a)] 
close all the edges with an end-vertex in $\overline{\{u\}}$ except for the edges of $\pi_u$ and $\Gamma$, 
\item[(b)] 
open all the edges in $\overline{\{u\}}$ that belong to a shortest path between $u$ and $\Gamma$ in $\overline{\{u\}}$. 
\end{itemize}
Notice that $\omega$ and $f(\omega)$ differ in at most $2d\,(k+1)^{d-2}$ edges. 
$Y$ is connected to $\Gamma$ in $Z\setminus\overline{\{u\}}$ in the configuration $f(\omega)$. 
Finally, since $u$ and $\Gamma$ are in different open clusters in $\omega$, 
after connecting $u$ and $\Gamma$ by a simple open path as in (b), no new open circuits are created.
Thus, the set $\overline{\{u\}}$ can be uniquely reconstructed in $f(\omega)$ 
as the unique such set where $X$ is connected to $\Gamma$.

\smallskip

Assume next that for any $u\in U$, $Y$ is not connected to $\Gamma$ in $Z\setminus\overline{\{u\}}$. 
Take $u\in U$. There exists $v\in \overline{\{u\}}$ such that $v$ is connected to $Y$ in $Z$ by an open self-avoiding path  
that from the first step on does not visit $\overline{\{v\}}$. 
Choose one such open self-avoiding path and denote it by $\pi_v$. 
For such $\omega$'s, we define $f(\omega)$ exactly as in the first part of Case (a). 
We
\begin{itemize}\itemsep0pt
\item[(a)] 
close all the edges with an end-vertex in $\overline{\{u\}}$ except for the edges of $\pi_u$, $\pi_v$, and $\Gamma$, 
\item[(b)] 
open all the edges in $\overline{\{u\}}$ that belong to a shortest path $\rho$ between $u$ and $\Gamma$ in $\overline{\{u\}}$, 
\item[(c)] 
open all the edges in $\overline{\{u\}}$ that belong to a shortest path between $v$ and $\Gamma\cup\rho$ in $\overline{\{u\}}$. 
\end{itemize}
Notice that unlike in Case (a), it is allowed here that $v\in \Gamma$, but this makes no difference for the construction. 
Indeed, after closing edges as in (a), $Y$ remains connected to $\Gamma$ only if $v\in\Gamma$. 
Thus, after modifying $\omega$ according to (a), either $u$, $v$, and $\Gamma$ are all in different open clusters 
or $v\in\Gamma$ and the clusters of $u$ and $\Gamma$ are different. 
In both cases, after connecting $u$, $v$, and $\Gamma$ by simple open paths as in (b) and (c), no new open circuits are created.  
Thus, the set $\overline{\{u\}}$ can be uniquely reconstructed in $f(\omega)$ as the unique set of the form $\overline{\{z\}}$ 
where $X$ (and $Y$) is connected to $\Gamma$. 

The function $f$ satisfies requirements (1) and (2), and the proof of \eqref{eq:main:glue:b} is complete. 

\smallskip

Since the proof of Case (c) is essentially the same as the proof of Case (b), we omit it. 
Cases (a)-(c) imply \eqref{eq:main:glue}. The proof of Lemma~\ref{l:main} is complete. 
\end{proof}

\begin{remark}
\begin{enumerate}\itemsep0pt
\item[(1)]
Theorem~\ref{thm:slabs} and Remark~\ref{rem:JaraiIIC} can be used to extend various results of J\'arai \cite{Jarai} to slabs. 
For instance, to prove that the local limit of the occupancy configurations around vertices in the bulk of a crossing cluster of large box 
are given by the IIC measures from Theorem~\ref{thm:slabs}. This will be detailed in \cite{Basu}.
\item[(2)]
Using Lemma~\ref{l:main}, one can show that the expected number of vertices of the IIC 
in $Q(n)$ is comparable to $n^2\P[0\leftrightarrow \partial Q(n)]$. 
\item[(3)]
In \cite{DS11}, the so-called multiple-armed IIC measures were introduced for planar lattices, which are supported on configurations with several disjoint infinite open clusters meeting in a neighborhood of the origin. 
These measures describe the local occupancy configurations around outlets of the invasion percolation \cite{DS11} and pivotals for open crossings of large boxes \cite{Basu}. 
It would be interesting to construct multiple-armed IIC measures on slabs, but at the moment it seems quite difficult. 
\end{enumerate}
\end{remark}

\end{document}